\newtheorem{thm}{Theorem}[section]
\newtheorem{prop}[thm]{Proposition}
\newtheorem{lem}[thm]{Lemma}
\newtheorem*{thm1}{Theorem A}
\newtheorem*{thm2}{Theorem B}
\theoremstyle{definition}
\newtheorem{defn}[thm]{Definition}
\theoremstyle{remark}
\let\c@equation\c@thm
\numberwithin{equation}{section}
\title[]{Examples of open manifolds with positive Ricci curvature and non-proper Busemann functions}
\author[]{Jiayin Pan}
\address[Jiayin Pan]{Department of Mathematics, University of California,  Santa Cruz, CA, USA.}
\email{jpan53@ucsc.edu}
\thanks{J. Pan was supported by Fields Postdoctoral Fellowship during the preparation of this paper.}
\author[]{Guofang Wei}
\address[Guofang Wei]{Department of Mathematics, University of California,  Santa Barbara, CA, USA.}
\email{wei@math.ucsb.edu}
\thanks{G. Wei is partially supported by NSF DMS 2104704.}
\begin{document}
	
	\begin{abstract}
		We give the first example of an open manifold with positive Ricci curvature and a non-proper Busemann function at a point. This provides counterexamples to the longtime well-known open question of whether the Busemann function at a point of an open manifold with nonnegative Ricci curvature is proper. 
	\end{abstract}
	
	\maketitle
	
In the study of open (complete and noncompact) Riemannian manifolds, the Busemann function plays an essential role and has many applications. Recall that the Busemann function associated with a unit speed ray $c$ is defined as
$$b_c(x)=\lim\limits_{t\to+\infty} t-d(x,c(t)).$$ It measures a (renormalized) distance to a point at infinity along $c$. The Busemann function at a point $p\in M$ is $b_p(x)=\sup_{c} b_c(x)$, where $\sup$ is taken over all rays starting at $p$. See the work by Cheeger-Gromoll, Wu, Eschenburg-Heintze, Shiohama, Shen, and Sormani in \cite{CGsplit,CGsoul,Wu79,EH84,Shiohama1984,Shen93,Sor98} and the references in there for basic properties and applications of Busemann functions. It was also discussed by Yau in \cite[Section 2.2.1]{Yau}. 

When $M$ has nonnegative sectional curvature, Cheeger-Gromoll proved that the Busemann function $b_p$ is convex and proper \cite{CGsoul}, which means that the sublevel set
$$b_p^{-1}(-\infty, a]=\bigcap_c b_c^{-1}(-\infty,a]$$
is compact for all $a\in\mathbb{R}$, where the intersection is taken over all rays starting at $p$. When $M$ has nonnegative Ricci curvature, Cheeger-Gromoll proved that the Busemann function is subharmonic \cite{CGsplit}. Some literature, for example, \cite{Wu79,Shen93} used a different version of the Busemann function at a point
$$\hat{b}_p=\lim\limits_{t\to\infty} t-d(p,S_t(p)),$$
where $S_t(p)$ is the geodesic sphere of radius $t$ at $p$. We call this function $\hat{b}_p$ the \textit{spherical Busemann function} at $p$ in this paper. It follows from the definition that $b_p\le \hat{b}_p$ (see \cite[Section 2.1]{Shen93}), so a proper $b_p$ implies that $\hat{b}_p$ is also proper. The properness of $\hat{b}_p$ does not depend on the point $p\in M$ \cite[Lemma 3]{Shen93}. It has been an open question since the 70s whether the Busemann function $b_p$ is always proper for nonnegative Ricci curvature \cite{Cheeger_com}. 

In this paper, we give a negative answer to the question. In fact, we give open manifolds of positive Ricci curvature whose spherical Busemann function $\hat{b}_p$ is not proper. Consequently, its Busemann function $b_p$ is also not proper.

\begin{thm1} 
	Given any integer $n\ge 4$, there is an open $n$-manifold $N$ with positive Ricci curvature and a non-proper spherical Busemann function.
\end{thm1}

With nonnegative Ricci curvature, the Busemann function $b_p$ is proper when $M$ has linear volume growth by Sormani \cite{Sor98} or Euclidean volume growth by Shen \cite{Shen96}. Recall that we say an asymptotic cone $(X,x)$ of an open manifold with $\mathrm{Ric}\ge 0$ is polar, if for any $z\in X-\{x\}$, there is a ray from $x$ through $z$ (see Definition \ref{def_polar}). For manifolds with linear or Euclidean volume growth, their asymptotic cones are always polar as shown in the work of Sormani \cite{Sor98} and Cheeger-Colding \cite{ChCo96}, respectively. One might ask whether polar asymptotic cones would imply the properness of Busemann function. Our examples also show that this is not the case in general. 

\begin{thm2}
	There is an open manifold $N$ of dimension $16$ with positive Ricci curvature, a unique polar asymptotic cone, and a non-proper spherical Busemann function.
\end{thm2}

Menguy's example \cite{Men00} shows that a proper Busemann function does not imply polar asymptotic cones either.

Our examples are closely related to the construction by Nabonnand in 1980 \cite{Nab80}. See also related constructions by B\'{e}rard-Bergery \cite{Ber86} and the second named author \cite{Wei88}. Nabonnand constructed open manifolds as doubly warped products $[0,\infty)\times_f S^2 \times_h S^1$ with positive Ricci curvature. We use the Riemannian universal cover $N$ of a doubly warped product $M=[0,\infty) \times_f S^{k-1} \times_h S^1$ with suitable functions $f$ and $h$ as our examples in Theorems A and B. The investigation of these universal covers also leads to the first examples of Ricci limit spaces with non-integer Hausdorff dimension presented by the authors in \cite{PW}.

For Theorem A, we require that the function $h$ decreases to $0$ as $r\to\infty$. With a base point $p\in M$ at $r=0$ and $\tilde{p}\in N$ as a lift of $p$ in the universal cover, such a function $h$ and the classical Clairaut's theorem imply that any ray in $N$ starting at $\tilde{p}$ must project to a radial ray in $M$ starting at $p$ via the covering map $\pi:N\to M$ (Lemma \ref{ray_proj}). Then we can further show that the preimage $\pi^{-1}(p)$, which is unbounded, is contained in the level set of Busemann function $b_{\tilde{p}}$ with value $0$; this shows that $b_{\tilde{p}}$ is not proper (Theorem \ref{orbit_to_ray}). To prove that the spherical Busemann function is also not proper, in Section 1.3 we further study the distance from any point in $\pi^{-1}(p)$ to a geodesic sphere $S_t(\tilde{p})$. For Theorem B, we use a function $h$ with logarithm decay and show that any asymptotic cone of $N$ is isometric to the standard half-plane $[0,\infty)\times \mathbb{R}$. 

Acknowledgments: We would like to thank Christina Sormani for bringing up this question to us and for very helpful conversations. We would like to thank Jeff Cheeger, Zhongmin Shen, and Zhifei Zhu for helpful communication. We would like to thank the referee for their valuable suggestions that improves the paper.

\section{The examples and their geometry}
\subsection{Riemannian metrics on $M$}\label{subsec_metric}
We follow Nabonnand's construction in \cite{Nab80}. We include the details for readers' convenience.
On $[0,\infty)\times S^{k-1}\times S^1$, we define a doubly warped product metric
$$g=dr^2+f(r)^2ds_{k-1}^2+h(r)^2 ds_1^2,$$
where $ds_m^2$ is the standard metric on the unit sphere $S^{m}$. When $f$ and $h$ satisfy
$$f^{(\text{even})}(0)=0,\quad f'(0)=1,\quad h(0)>0,\quad f^{(\text{odd})}(0)=0,$$
$(M,g)$ is a Riemannian manifold diffeomorphic to $\mathbb{R}^k\times S^1$ (see \cite[Section 1.4.5]{Petersen}). $M$ with such a doubly warped product metric is denoted by $[0,\infty)\times_f S^{k-1} \times_h S^1$.

Let $H=\partial/\partial r$, $U$ a unit vector tangent to $S^{k-1}$, and $V$ a unit vector tangent to $S^1$. Then the metric has Ricci curvature
\begin{align*}
	&\mathrm{Ric}(H,H)=-\dfrac{h''}{h}-(k-1)\dfrac{f''}{f},\\
	&\mathrm{Ric}(U,U)=-\dfrac{f''}{f}+\dfrac{k-2}{f^2} \left[1-(f')^2\right]-\dfrac{f'h'}{fh},\\
	&\mathrm{Ric}(V,V)=-\dfrac{h''}{h}-(k-1)\dfrac{f'h'}{fh}.
\end{align*}

As indicated in the introduction, to construct our examples for Theorems A and B, we will use the Riemannian universal cover $N$ of $(M,g)$. We will choose different functions, $f$ and $h$, for the different theorems.

For Theorem A, we use the construction in Nabonnand's work \cite{Nab80}. We include some details below for readers' convenience.

\begin{prop}\label{calc}
	Let $$\varphi (x) = \dfrac{\sqrt{3}}{\pi}\int_0^x \dfrac{\arctan (u^3)}{u^2} du.$$
	Let $f$ be the solution of the ODE $f' = (1-\varphi (f))^{1/2}$ with $f(0) =0$ and let $h=f'$. Then the doubly warped product $[0,\infty)\times_f S^{k-1} \times_h S^1$ has positive Ricci curvature when $k\ge 3$. Moreover, $h$ strictly decreases to $0$ on $[0,\infty)$.
\end{prop}

\begin{proof}
	As 
	$$\int_0^\infty \dfrac{\arctan (u^3)}{u^2} du =  \dfrac{\pi}{\sqrt{3}}$$ 
	and $\varphi$ is strictly increasing on $[0,\infty)$,  we have 
	$$\varphi(0)=0,  \quad 0\le \varphi < 1, \quad \lim\limits_{x\to+\infty}\varphi(x)=1,  \quad \varphi'(0)=0.$$
	Since $f$ satisfies the ODE $f' = (1-\varphi (f))^{1/2}$ and $f(0)=0$, we have
	$$f'(0)=1,\quad 0<f'<1 \text{ on } (0,\infty),\quad \lim\limits_{r\to+\infty} f(r)=+\infty,\quad \lim\limits_{r\to+\infty} f'(r)=0,$$
	$$f''=\dfrac{1}{2}\left[ 1-\varphi(f) \right]^{-1/2}\cdot (-\varphi'(f) \cdot f')= -\dfrac{1}{2} \varphi'(f).$$
	Thus $f''<0$ on $(0,\infty)$. Also, an inductive argument shows that $f^{(\text{even})}(0)=0$. Together with our choice of $h=f'$, we see that $h$ strictly decreases to $0$ on $[0,\infty)$ and $[0,\infty)\times_f S^{k-1} \times_h S^1$ defines a smooth Riemannian metric.
	
	Next, we calculate the Ricci curvature. Observe that we have proved 
	$$ 0<f'<1,\quad f''<0,\quad h'<0$$
	Thus $\mathrm{Ric}(U,U) >0$ given $k\ge 2$. $h=f'$ implies $\mathrm{Ric}(V,V)= \mathrm{Ric}(H,H)$. We calculate
	$$\dfrac{h''}{h}=\dfrac{f'''}{f'}=-\dfrac{1}{2}\cdot \dfrac{\varphi''(f)\cdot f'}{f'}=-\dfrac{1}{2}\varphi''(f).$$
	Then by direct calculation,
	\begin{align*}
		\mathrm{Ric}(H,H) =& - \frac{f'''}{f'} -(k-1) \frac{f''}{f} \\
		= & \dfrac{1}{2}\varphi''(f) + (k-1)\cdot \dfrac{1}{2}\cdot \dfrac{\varphi'(f)}{f}\\
		= & \dfrac{\sqrt{3}}{2\pi}\left[\dfrac{3}{f^6+1}+(k-3)  \frac{\arctan (f^3)}{f^3}\right],
	\end{align*}
	which is positive given $k \ge 3$.
\end{proof}

For Theorem B, we use the functions defined by the first named author in \cite[Example B.9]{Pan21}:
$$f(r)=\dfrac{\sqrt{\ln 2}\cdot r}{\ln^{1/2}(2+r^2)},\quad h(r) =\dfrac{1} {\ln(2+r^2)}.$$
One can verify that the above $f$ and $h$ imply positive Ricci curvature when $k\ge 15$.

\subsection{Rays in the Riemannian universal cover $N$}\label{subsec_rays}

Let $N$ be the Riemannian universal cover of $M$. To study the rays in $N$, we first study the geodesics in $M$.

Given the metric $[0,\infty)\times_f S^{k-1}\times_h S^1$, we write each point in $M$ as $(r,x,v)$, where $r\ge 0$, $x\in S^{k-1}$, and $v\in S^1$. Note that $(0,x_1,v)$ and $(0,x_2,v)$ represent the same point in $M$.

We always fix a point $p=(0,x,0)$ at $r=0$ as our base point in $M$. Given any point $x\in S^{k-1}$, we denote a submanifold
$$C(x)=\{(r,\pm x,v)|r\ge 0, v\in S^1\}.$$
Note that $C(x)$ is totally geodesic and geodesically complete. The induced Riemannian metric on $C(x)$ is isometric to the warped product $\mathbb{R}\times_{\bar{h}} S^1$, where $\bar{h}$ is the extension of $h$ as an even function.

We recall Clairaut's theorem in classical differential geometry, which describes the geodesics on any $2$-dimensional warped product (see e.g.\cite{doCarmo}).

\begin{thm}\label{Clairaut}(Clairaut)
	Let $C=\mathbb{R}\times_h S^1$ be a warped product, where $h:\mathbb{R}\to \mathbb{R}_+$ is a smooth function. Let $\sigma(t)=(r(t),v(t))$ be a geodesic of unit speed in $C$. Then $h(r(t))\cos \theta(t)$ is independent of $t$, where $\theta(t)$ is the angle between the curve $\sigma$ and the parallel circle at $\sigma(t)$.
\end{thm}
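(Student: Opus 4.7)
The plan is to prove Clairaut's theorem by exploiting the rotational symmetry of the surface of revolution as a Killing vector field, which gives a conserved quantity along any geodesic.

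First I would parametrize $S^1$ by an angular coordinate $y$, so the metric on $S = \mathbb{R}\times_h S^1$ takes the form $dr^2 + h(r)^2\,dy^2$. Since the metric coefficients depend only on $r$, the coordinate vector field $V = \partial/\partial y$ generates a one-parameter group of isometries (rotations about the axis), hence $V$ is a Killing field. Note that $|V| = h(r)$, which is exactly $\rho(t)$ evaluated along the geodesic: $\rho(t) = h(r(\sigma(t)))$.

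Next I would invoke the standard fact that for any Killing field $X$ and any geodesic $\sigma$, the inner product $\langle X,\sigma'\rangle$ is constant along $\sigma$. This is immediate from
\[
\frac{d}{dt}\langle X,\sigma'\rangle = \langle \nabla_{\sigma'} X,\sigma'\rangle + \langle X,\nabla_{\sigma'}\sigma'\rangle = 0,
\]
where the first term vanishes by the Killing equation $\langle \nabla_W X, W\rangle = 0$ and the second vanishes since $\sigma$ is a geodesic.

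Assuming $\sigma$ is unit speed (a harmless normalization since the claim is invariant under affine reparametrization), the parallel circle through $\sigma(t)$ has unit tangent $V/|V|$, so by definition of $\theta(t)$,
\[
\cos \theta(t) = \bigl\langle \sigma'(t),\, V/|V|\bigr\rangle.
\]
Therefore
\[
\langle V,\sigma'(t)\rangle = |V|\cos\theta(t) = h(r(\sigma(t)))\cos\theta(t) = \rho(t)\cos\theta(t),
\]
and the left side is $t$-independent by the Killing-field observation. This yields the desired conservation law. There is really no serious obstacle; the only thing to be careful about is treating $h$ as a function on all of $\mathbb{R}$ (as specified in the statement, with $h$ smooth and positive) so that $V$ is a globally defined smooth Killing field and Clairaut's invariant is well-defined along the entire geodesic.
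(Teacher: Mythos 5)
Your proof is correct and complete. Note that the paper itself does not prove this statement at all: it is recalled as a classical fact with a citation to do Carmo, where the usual derivation goes through the geodesic differential equations of a surface of revolution in local coordinates (the second geodesic equation reads $\frac{d}{dt}\bigl(h^2 \frac{dy}{dt}\bigr)=0$, which is the same conserved quantity you found). Your route via the Killing field $V=\partial/\partial y$ and the identity $\frac{d}{dt}\langle V,\sigma'\rangle=0$ is the standard coordinate-free version of that computation; it is cleaner, works verbatim for any warped product over an interval, and correctly identifies $\rho=|V|=h(r)$ as the radius of the parallel circle in the abstract surface $\mathbb{R}\times_h S^1$. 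The only point worth a word of care is the sign convention for $\theta$: the conserved quantity is the signed inner product $\langle V,\sigma'\rangle$, so if one insists on $\theta\in[0,\pi/2]$ the invariant is really $\rho\,|\cos\theta|$; with $\theta$ measured consistently in $[0,\pi]$ against a fixed orientation of the parallels, your formula $\cos\theta=\langle\sigma',V/|V|\rangle$ is exactly right. This is immaterial for the paper's application, which only uses that $\rho(t)\cos\theta(t)$ is bounded away from zero when $\cos\theta(0)\neq 0$.
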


In our context, $h=\bar{h}$ is a smooth even function with
$$h(0)>0,\quad h'(0)=0,\quad \lim\limits_{r\to\infty} h(r)=0,\quad h'(r)<0 \text{ on } (0,\infty).$$
Let $\sigma:[0,\infty)\to C$ be a geodesic of unit speed in $C=\mathbb{R}\times_h S^1$ with $\sigma(0)=(0,v)$, where $v\in S^1$. If $\theta(0)=0$, then $\sigma$ is a closed geodesic as the circle $\{r=0\}$. If $\theta(0)=\pi/2$, then $\sigma(t)=(t,v)$ is a ray. If $\theta(0)\not=0$ and $\not=\pi/2$, then by Clairaut's theorem,
$$h(0)\cos\theta(0)=h(r(t))\cos \theta(t)$$
for all $t\in\mathbb{R}$. In particular, $h(r(t))$ cannot be arbitrarily small. Thus $\sigma$ is bounded given that $h$ decays to $0$ as $r\to\infty$. Also, note that $\sigma$ must turn back at some point; in other words, it cannot stay asymptotically close to a parallel as $t\to\infty$. In fact, if $\sigma$ stays asymptotically close to $\{r=0\}$, then 
$$h(0)>h(0)\cos\theta(0)=r(h(t))\cos \theta(t)\to h(0)$$
as $t\to\infty$, which cannot happen; if $\sigma$ stays asymptotically close to $\{r=r_0\}$ for some $r_0\not= 0$, then part of the parallel $\{r=r_0\}$ is the limit of a sequence of minimal geodesics, but itself is not a geodesic by the choice of $h$. Therefore, when $\theta(0)\not=0$ and $\not=\pi/2$, we deduce that $\sigma$ oscillates between $\{r\ge 0\}$ and $\{r\le 0\}$ indefinitely; in particular, $\sigma$ crosses $\{r=0\}$ transversely infinitely many times.

With above, we can describe the geodesics of $M$ starting from $p$ as below. 
\begin{lem}\label{geod_p}
	Let $M=[0,\infty) \times_f S^{k-1} \times_h S^1$ be a doubly warped product, where $h$ decreases to $0$ as $r\to\infty$. Then \\
	(1) every geodesic of $C(x)$ with respect to the induced Riemannian metric is a geodesic in $M$.\\
	(2) every geodesic of $M$ starting at $p$ is contained in a $C(x)$ for some $x\in S^{k-1}$. \\
	(3) every geodesic of $M$ starting at $p$ has one of the following forms:\\
	i. a radial ray $\sigma(t)=(r(t),\pm x,v)$ for some $v\in S^1$;\\
	ii. a closed geodesic as the circle $\{r=0\}$;\\
	iii. $\sigma(t)=(r(t),s(t)x,v(t))$, where $s:[0,\infty)\to \{1,-1\}$ only changes value when $r(t)=0$; in this case, $\sigma$ is bounded and crosses $\{r=0\}$ transversely infinitely many times.
\end{lem}

\begin{proof}
	The submanifold $C(x)$ is totally geodesic in $M$ for all $x\in S^{k-1}$, thus (1) follows. Next, we prove (2). We identity the tangent plane $T_pM$ with $\mathbb{R}^k\times\mathbb{R}$ of a linear metric $$dr^2+r^2ds^2_{k-1}+du^2.$$
	A geodesic in $C(x)$ starting at $p$ has an initial vector $(t,\pm x,v)\in T_pM$, where $t\in[0,\infty)$ and $v\in\mathbb{R}$. Note that these initial vectors 
	$$\cup_{x\in S^{k-1}}\{(t,\pm x,v)| t\in[0,\infty), v\in \mathbb{R} \}$$
	exhaust all vectors in $T_pM$. Together with the fact that every $C(x)$ is totally geodesic, we conclude that a geodesic in $M$ with initial point $p$ must be contained in some $C(x)$ for a short time. Because $C(x)$ is also geodesically complete, the geodesic is entirely contained in this $C(x)$.
	
	With (2), the last statement follows directly from the description of the geodesics in the warped product $\mathbb{R}\times_{\bar{h}} S^1$ starting at $(0,v)$.
\end{proof}

\begin{lem}\label{ray_proj}
	Let $M=[0,\infty) \times_f S^{k-1} \times_h S^1$ be a doubly warped product with $\mathrm{Ric}>0$, where $h$ decreases to $0$ as $r\to\infty$. Let $c$ be a ray in the universal cover $N$ of $M$ starting at $\tilde{p}$, a lift of $p\in \{r=0\}$. Then $c$ projects to a radial ray $(r(t),x,v)$ in $M$ for some $x\in S^{k-1}$ and $y\in S^1$.
\end{lem}

\begin{proof}
	Note that any ray in the universal cover $N$ at $\tilde{p}$ projects to a geodesic in $M$ starting at $p$. It suffices to show that any geodesic described in Lemma \ref{geod_p}(ii) and (iii) cannot lift to a ray in $N$. Let $\sigma$ be a geodesic in $M$ described in Lemma \ref{geod_p}(ii) or (iii), and let $c$ be the lift of $\sigma$ at $\tilde{p}\in N$. In these cases, $\sigma$ is bounded. Let $R>0$ such that $\sigma$ is contained in the closed metric ball $\overline{B_R}(p)$. Then its lift $c$ is contained in $\pi^{-1}(\overline{B_R}(p))$, where $\pi$ is the covering map. 
	
	We argue by contradiction and suppose that $c$ is a ray in $N$. We write $\Gamma$ as the fundamental group $\pi_1(M,p)$, which is isomorphic to $\mathbb{Z}$. For each $j\in\mathbb{N}$, we can find $g_j\in \Gamma$ such that $g_j\cdot c(j)\in \overline{B_R}(\tilde{p})$. Then $g_j\cdot c|_{[0,2j]}$ is a minimal geodesic with length $2j$ and midpoint in $\overline{B_R}(\tilde{p})$. Passing to a subseqeuence if necessary, $g_j\cdot c|_{[0,2j]}$ converges to a line in $N$ as $j\to\infty$. By Cheeger-Gromoll's splitting theorem \cite{CGsplit}, $N$ splits off a line isometrically; a contradiction to positive Ricci curvature. 
\end{proof}

\subsection{Non-properness of the Busemann function in $N$}

We first prove that on the universal cover $N$, the Busemann function $b_{\tilde{p}}$ is not proper.

\begin{thm}\label{orbit_to_ray}
	Let $M=[0,\infty) \times_f S^{k-1} \times_h S^1$ be a doubly warped product with $\mathrm{Ric}>0$, where $h$ decreases to $0$ as $r\to\infty$. Let $p\in\{r=0\}$ and let $\gamma$ be a generator of $\pi_1(M,p)=\mathbb{Z}$. Then on the universal cover $(N,\tilde{p})$,
	$b_c(\gamma^l\tilde{p})=0$ for all $l\in\mathbb{Z}$ and all unit speed ray $c$ starting at $\tilde{p}$. Consequently, $b_{\tilde{p}}(\gamma^l\tilde{p})=0$ for all $l\in\mathbb{Z}$ and $b_{\tilde{p}}$ is nor proper.
\end{thm}

\begin{proof}
	By Lemma \ref{ray_proj}, $c$ projects to a radial ray $\bar{c}(t)=(t,x,v)$
	for some $x\in S^{k-1}$. Let $\alpha$ be a minimal geodesic from $\gamma^l\tilde{p}$ to $c(t)$, where $t>0$, and let $\bar{\alpha}$ be its projection to $M$. Among all curves from $p$ to $\bar{c}(t)$ in $C(x)$ that winds around $C(x)$ exactly $l$ rounds, $\bar{\alpha}$ has the shortest length. Let $\beta$ be a curve from $p$ to $\bar{c}(t)$ constructed as follows: $\beta$ moves along the radial ray $\bar{c}$ until the point $\bar{c}(t)$, then $\beta$ winds around the parallel circle at $\bar{c}(t)$ exactly $l$ rounds. We have
	$$d(\gamma^l\tilde{p},c(t))=\mathrm{length}(\bar{\alpha})\le \mathrm{length}(\beta)=t+l\cdot 2\pi h(t).$$
	It follows that
	$$b_c(\gamma^l\tilde{p})=\lim\limits_{t\to+\infty} t-d(\gamma^l\tilde{p},c(t))\ge \lim\limits_{t\to+\infty} -l\cdot 2\pi h(t)=0.$$
	On the other hand, because the covering map is distance non-increasing,
	$$d(\gamma^l\tilde{p},c(t))\ge d(p,\bar{c}(t))=t,$$
	thus 
	$$b_c(\gamma^l\tilde{p})=\lim\limits_{t\to+\infty} t-d(\gamma^l\tilde{p},c(t))\le 0.$$
	We conclude that $b_c(\gamma^l\tilde{p})=0$ and 
	$$b_{\tilde{p}}(\gamma^l\tilde{p})=\sup_c b_c(\gamma^l\tilde{p})=0,$$
	where $\sup$ is taken over all rays in $N$ starting at $\tilde{p}$. Because $\Gamma=\pi_1(M,p)$ is isomorphic to $\mathbb{Z}$, the orbit $\Gamma\tilde{p}$ is unbounded. It follows that the level set $b_{\tilde{p}}^{-1}(0)$ is non-compact.
\end{proof}

Next, we study the spherical Busemann function $\hat{b}_{\tilde{p}}$. We continue to use the notation $(C,g_0)=\mathbb{R}\times_h S^1$ as a $2$-dimensional warped product, where $h$ is an even function with $\lim_{r\to\infty} h(r)=0$. Let $\widetilde{C}$ be its Riemannian universal cover, whose metric is given by
$$\tilde{g}_0=dr^2+ h^2(r)dv^2 $$
defined on $\mathbb{R}^2$. Let $d_0$ be the induced distance function on $\widetilde{C}$. Below, we write any point in $\widetilde{C}$ as $(r,v)$. Note that the map $(r,v)\mapsto (r,v+l)$ is an isometry of $\widetilde{C}$ for each $l\in \mathbb{Z}$.

\begin{lem}\label{orbit_to_far}
	Fix an $l\in\mathbb{Z}$. Let $m_i=(r_i,v_i)\in \widetilde{C}$ be a sequence with $r_i\to+\infty$. Then
	$$\lim\limits_{i\to\infty} d_0((0,0),m_i)-d_0((0,l),m_i)\to 0.$$
\end{lem}

\begin{proof}
	We estimate that
	\begin{align*}
		& |d_0((0,0),(r_i,v_i))-d_0((0,l),(r_i,v_i))| \\
		=\ & |d_0((0,0),(r_i,v_i))-d_0((0,0),(r_i,v_i-l))|\\
		\le\ & d_0((r_i,v_i),(r_i,v_i-l))\\
		=\ & d_0((r_i,0),(r_i,l))\\
		\le\ & l\cdot 2\pi h(r_i) \to 0
	\end{align*}
	as $i\to\infty$.
\end{proof}

Let $M=[0,\infty) \times_f S^{k-1} \times_h S^1$ be a doubly warped product. The universal cover $N$ has its Riemannian metric as a doubly warped product $[0,\infty)\times_f S^{k-1} \times_h \mathbb{R}$:
$$dr^2+f^2(r) ds_{k-1}^2 + h^2(r) dv^2.$$
We use $(r,x,v)$ to denote points in $N$, where $r\in[0,\infty)$, $x\in S^{k-1}$, and $v\in\mathbb{R}$. Note that for each $l\in\mathbb{Z}$, the point $\gamma^l\tilde{p}$ corresponds to $(0,x,l)$ for any $x\in S^{k-1}$.

\begin{lem}\label{dist_equal}
	$d_N(\gamma^l\tilde{p},(r,x,v))=d_0((0,l),(r,v))$ holds for all $(r,x,v)\in N$.
\end{lem}

\begin{proof}
	We fix a point $x\in S^{k-1}$. Let $\sigma$ be a minimal geodesic from $\gamma^l \tilde{p}=(0,x,l)$ to $(r,x,v)$. Then $\pi\circ\sigma$ is a geodesic in $M$ from $p$. By Lemma \ref{geod_p}, $\pi\circ\sigma$ is entirely contained in $C(x)$ and thus naturally defines a geodesic $\sigma_0$ in $(C,g_0)$ with the same length. Let $\tilde{\sigma}_0$ be the lift of $\sigma_0$ to $\widetilde{C}$ at $(0,l)$. Note that $\tilde{\sigma}_0$ is also minimal; otherwise, we can correspondingly define a shorter curve in $N$ from $(0,x,l)$ to $(r,x,v)$. Then the result follows.
\end{proof}

\begin{thm}\label{s_Busemann_nonproper}
	Let $M=[0,\infty) \times_f S^{k-1} \times_h S^1$ be a doubly warped product with $\mathrm{Ric}>0$, where $h$ decreases to $0$ as $r\to\infty$. Let $(N,\tilde{p})$ be the Riemannian universal cover of $(M,p)$, where $p\in M$ is a point at $r=0$. Then the Busemann function $\hat{b}_{\tilde{p}}$ is not proper.
\end{thm}

\begin{proof}
	Similar to the result of Theorem \ref{orbit_to_ray}, we shall prove that $\hat{b}_{\tilde{p}}(\gamma^l\tilde{p})=0$ for all $l\in\mathbb{Z}$. Then the level set $\hat{b}_{\tilde{p}}^{-1}(0)$ is non-compact. We fix an $l\in\mathbb{Z}$. The desired lower bound follows from Theorem \ref{orbit_to_ray}:
	$$\hat{b}_{\tilde{p}}(\gamma^l\tilde{p})\ge b_{\tilde{p}}(\gamma^l\tilde{p})=0.$$ It remains to prove that $\hat{b}_{\tilde{p}}(\gamma^l\tilde{p})\le 0$. 
	
	Let $t_i\to+\infty$ be a sequence and let $z_i=(r_i,x_i,v_i)\in S_{t_i}(\tilde{p})$ such that 
	$$d_N(\gamma^l\tilde{p},S_{t_i}(\tilde{p}))=d_N(\gamma^l\tilde{p},z_i).$$
	
	
	
	In $(\widetilde{C},d_0)$, let $\sigma_i$ be a minimal geodesic from $(0,l)$ to $(r_i,v_i)$. Let $m_i=(R_i,w_i)$ be a point on $\sigma_i$ such that the $r$-component $R_i$ of $m_i$ is the maximal among all points on $\sigma_i$. Note that $R_i\to\infty$. Otherwise, by Lemmas \ref{geod_p} and \ref{dist_equal}, such $\sigma_i$ implies that $N$ contains a sequence of minimal geodesics with diverging length in $\pi^{-1}(\overline{B_R}(p))$ for some $R>0$, where $\pi:N \to M$ is the covering map. Then similar to Lemma \ref{ray_proj}, we can use suitable covering transformations to derive a limit line in $N$. This contradicts with Cheeger-Gromoll's splitting theorem and the positive Ricci curvature of $N$.
	
	By Lemma \ref{dist_equal}, triangle inequality, and the choice of $m_i$,
	\begin{align*}
		&t_i - d_N(\gamma^l \tilde{p},S_{t_i}(\tilde{p})) = d_0((0,0),(r_i,v_i))-d_0((0,l),(r_i,v_i))\\
		\le \ & d_0((0,0),m_i)+d_0(m_i,(r_i,v_i))-d_0((0,l),m_i)-d_0(m_i,(r_i,v_i))\\
		= \  & d_0((0,0),m_i)-d_0((0,l),m_i).
	\end{align*}
	Then by Lemma \ref{orbit_to_far},
	$$\hat{b}_{\tilde{p}}(\gamma^l \tilde{p})= \lim\limits_{i\to\infty}t_i - d_N(\gamma^l \tilde{p},S_{t_i}(\tilde{p}))\le 0.$$
	
	We complete the proof of $\hat{b}_{\tilde{p}}(\gamma^l \tilde{p})= 0$ for all $l\in\mathbb{Z}$.
\end{proof}

Theorem A follows from Proposition \ref{calc} and Theorem \ref{s_Busemann_nonproper}.

\subsection{Polar asymptotic cones}

We recall the following definition.

\begin{defn}\label{def_polar}
	Let $M$ be an open manifold with $\mathrm{Ric}\ge 0$. An asymptotic cone of $M$ (or a tangent cone of $M$ at infinity) is the limit of a pointed Gromov-Hausdorff convergent sequence
	$$(r_i^{-1}M,p)\overset{GH}\longrightarrow (X,x),$$
	where $r_i\to\infty$. We say that $(X,x)$ is polar, if for any $z\in X-\{x\}$, there exists a ray starting from $x$ and going through $z$.
\end{defn}

\begin{proof}[Proof of Theorem B]
	As indicated, we use $M$ as the doubly warped product with $h(r)=\ln^{-1}(2+r^2)$ in Section \ref{subsec_metric}, then $N$ as the Riemannian universal cover of $M$. By Theorem \ref{s_Busemann_nonproper}, $N$ has a non-proper spherical Busemann function because $h$ decreases to $0$ as $r\to +\infty$. It remains to show that any asymptotic cone $(Y,y)$ of $N$ is polar. In fact, we show that $Y$ is the standard halfplane $[0,\infty)\times\mathbb{R}$ and $y=(0,0)$. We will present two different proofs below. The first one directly calculates the limit metric. The second one follows from the arguments by the first named author in \cite[Appendix B]{Pan21} and by the authors in \cite{PW}. While the second one is indirect compared to the first one, it may provide independent interests to some readers.
	
	\textit{Proof I:}
	Let $\lambda>1$. Under a change of variables $s=\lambda^{-1} r$ and $w=(2\lambda \ln\lambda)^{-1} v$ on $N=[0,\infty) \times_f S^{k-1} \times_h \mathbb{R}$ with
	$$g_N=dr^2 + f^2(r) d_{k-1}^2+ h^2(r) dv^2,$$ we have
	\begin{align*}
		\lambda^{-2}g_N&=\lambda^{-2} \left[dr^2+ \dfrac{(\ln 2)\cdot r^2}{\ln(2+r^2)} ds_{k-1}^2 + \dfrac{1}{\ln^{2}(2+r^2)} dv^2 \right] \\
		&= ds^2 + \left[ \dfrac{(\ln 2)\cdot s^2}{\ln(2+\lambda^2 s^2)} \right] ds_{k-1}^2 +\left[\dfrac{2\ln\lambda}{\ln(2+\lambda^2s^2)}\right]^2 dw^2 \\
		&=: ds^2 + f_\lambda(s)^2 ds_{k-1}^2 + h_\lambda(s)^2 dw^2.
	\end{align*}
	Note that as $\lambda\to+\infty$, $f_\lambda$ and $h_\lambda$ converge uniformly to $0$ and $1$, respectively, on every compact subset of $(0,\infty)$. Thus the limit space has the standard metric $ds^2+dw^2$ on the open halfplane $(0,\infty)\times\mathbb{R}$. Hence its metric completion, the closed standard halfplane, is the asymptotic cone. 
	
	\textit{Proof II:} Let $\gamma$ be a generator of $\Gamma=\pi_1(M,p)$ and let $c_l$ be a minimal representing loop of $\gamma^l\in \Gamma$, where $l\in \mathbb{Z}$. In \cite[Example B.9]{Pan21}, we have estimated $\mathrm{length}(c_l)$ and $d_H(p,c_l)$, where $d_H$ means the Hausdorff distance, and showed that
	$$\lim\limits_{l\to\infty} \dfrac{d_H(p,c_l)}{\mathrm{length}(c_l)}=0.$$
	Let $\tilde{c_l}$ be the lift of $c_l$ starting at $\tilde{p}$; note that $\tilde{c_l}$ is a minimal geodesic from $\tilde{p}$ to $\gamma^l\cdot \tilde{p}$. For each $l\in\mathbb{Z}$, we put $\alpha_l=\gamma^{-l}\cdot \tilde{c}_{2l}$, which is a minimal geodesic from $\gamma^{-l}\tilde{p}$ to $\gamma^l\tilde{p}$. Then $\alpha_l$ is contained in the tubular neighborhood $B_{\epsilon_l}(\Gamma \tilde{p})$, where $\epsilon_l$ satisfies  
	$$\lim\limits_{l\to\infty} \dfrac{\epsilon_l}{d(\tilde{p},\gamma^{2l} \tilde{p})}=0.$$
	Now following \cite[Section 1.3]{PW}, we study the asymptotic cones of $N$. Let $r_i\to\infty$ be a sequence and we consider the equivariant Gromov-Hausdorff convergence
	\begin{center}
		$\begin{CD}
			(r^{-1}_iN,\tilde{p},\Gamma) @>GH>> 
			(Y,y,G)\\
			@VV\pi V @VV\pi V\\
			(r^{-1}_iM,p) @>GH>> (X,x).
		\end{CD}$
	\end{center}
	By \cite[Lemma 3.4]{FY92}, $X$ is isometric to the quotient space $Y/G$. Because $r^{-1}f(r)\to 0$ and $h(r)\to 0$ as $r\to\infty$ in our construction, $X$ is isometric to a half-line $[0,\infty)$. We view $Y$ as attaching orbits to $[0,\infty)$. By construction, each limit orbit is homeomorphic to $\mathbb{R}$ and $Y$ is homeomorphic to $[0,\infty)\times \mathbb{R}$, where the orbit $Gy$ corresponds to $\{0\}\times \mathbb{R}$. For any $g\in G-\{e\}$, there is a sequence $l_i\to\infty$ such that
	$$(r_i^{-1}N,\tilde{p},\gamma^{l_i})\overset{GH}\longrightarrow (Y,y,g).$$
	Because $\alpha_{l_i}$, a minimal geodesic from $\gamma^{-l_i}\tilde{p}$ to $\gamma^{l_i}\tilde{p}$, is contained in $B_{\epsilon_{l_i}}(\Gamma\tilde{p})$, where $r_i^{-1}\epsilon_{l_i}\to 0$ as $i\to\infty$, $\alpha_{l_i}$ converges to a limit minimal geodesic $\alpha_g$ from $g^{-1}y$ to $gy$ that is contained in the limit orbit $Gy$ and passes through $y$. Let $g_j\in G$ be a sequence with $d(g_jy,y)\to\infty$. Then $\alpha_{g_j}$ subconverges to a line in $Gy$ through $y$. By Cheeger-Colding's splitting theorem \cite{ChCo96}, $Y$ splits isometrically as $[0,\infty)\times \mathbb{R}$, which is polar at the base point $y=(0,0)$ (Cf. \cite[Theorem 1.3]{Pan21}). 
\end{proof}	

Comparing the asymptotic cone in Theorem B with the ones in \cite{PW}, we remark that in \cite{PW} the warping function $h$ has polynomial decay, which is different from the logarithm decay in Theorem B, then the corresponding asymptotic cone $Y$ does not split off any lines.
	
\bibliographystyle{plain} 

\end{document}